\newtheorem{theorem}{Theorem}
\newtheorem{lemma}{{Lemma}}
\newtheorem{definition}{ {Definition}}
\newtheorem{remark}{ {Remark}}
\newtheorem{assumption}{ {Assumption}}
\newcommand{\norm}[1]{\left\|#1\right\|}
\definecolor{ultramarine}{RGB}{31, 119, 180}
\newcommand{\submin}{\text{min}}
\newcommand{\submax}{\text{max}}
\newcommand{\subinit}{\text{init}}
\newcommand\toggleFScomm{1}
\newcommand{\MR}{\mathbb{R}}
\newcommand{\Rfunc}[3]{{#1} \colon \mathbb{R}^{#2} \to \mathbb{R}^{#3}}
\newcommand{\intset}[2]{\left[#1,#2\right]_{\mathbb{N}}}
\newcommand{\vnorm}[1]{\|#1\|}
\newcommand{\blue}[1]{\textcolor{black}{#1}}
\begin{document}
%
\title{Discrete-Time Lossless Convexification for Pointing Constraints}
\author{Dayou Luo\textsuperscript{\footnotemark{1}}, Fabio Spada\textsuperscript{\footnotemark{2}}, Beh\c cet A\c c{\i}kme\c se\textsuperscript{\footnotemark{3}} \vspace{-0.5cm}} 

\maketitle
\footnotetext[1]{Ph.D. Candidate, Department of Applied Mathematics, University of Washington, Seattle, WA 98195.
{\tt\small dayoul@uw.edu}}
\footnotetext[2]{Ph.D. Student, William E. Boeing Department of Aeronautics
and Astronautics, University of Washington, Seattle, WA 98195.
        {\tt\small fspada46@uw.edu}}
\footnotetext[3]{Professor, William E. Boeing Department of Aeronautics
and Astronautics, University of Washington, Seattle, WA 98195.
        {\tt\small behcet@uw.edu}}

\begin{abstract}

Discrete-Time Lossless Convexification (DT-LCvx) formulates a convex relaxation for a specific class of discrete-time non-convex optimal control problems. It establishes sufficient conditions under which the solution of the relaxed problem satisfies the original non-convex constraints at specified time grid points. Furthermore, it provides an upper bound on the number of time grid points where these sufficient conditions may not hold, and thus the original constraints could be violated. This paper extends DT-LCvx to problems with control pointing constraints. Additionally, it introduces a novel DT-LCvx formulation for mixed-integer optimal control problems in which the control is either inactive or constrained within an annular sector. Such formulation broadens the feasible space for problems with pointing constraints. A numerical example is provided to illustrate its application.

\end{abstract}

\begin{IEEEkeywords}
Lossless convexification, convex optimization, pointing constraints, computational guidance
\end{IEEEkeywords}

\section{Introduction}
\if\toggleFScomm1
\IEEEPARstart{C}{onvex} \blue{optimization has been a central tool in trajectory optimization for decades; furthermore, the last two decades have witnessed advancements of convex optimization techniques as well in the domains of guidance and control of autonomous systems \cite{malyuta2022convex, liu2017survey, wang2024survey};}
\else
\IEEEPARstart{I}{n the} most recent years, convex optimization has become the capstone of computational guidance and control strategies for autonomous systems\cite{malyuta2022convex}; \fi
convergence to globally optimal points and polynomial-time complexity constitute its strengths \cite{Nesterov1994}. In this context, a few non-convex Optimal Control Problems (OCPs) can be \textit{relaxed} to provably-equivalent convex OCPs by substituting the non-convex constraints with their convex counterparts. The relaxed problem can then be solved to global optimality while preserving the satisfaction of the original non-convex constraints. This relaxation technique is therefore referred to as Lossless Convexification (LCvx).

LCvx has been successfully applied to\if\toggleFScomm1
\blue{compute optimal constrained trajectories for autonomous landing vehicles, subject to}
\fi annular control constraints \cite{acikmese2007convex, blackmore2010, kunhippurayil2021lossless}, pointing constraints \cite{acikmese2013} and linear and quadratic state constraints \cite{harris2013lossless, harris2014lossless}.  
\blue{Such constraints are often dictated by safety considerations: for example, \textit{pointing constraints} are imposed to limit vehicle's orientation.}
\blue{Moreover, LCvx has been flight-proven \cite{acikmese2013flight} and it is used as a baseline for Moon landing applications \cite{ Shaffer2024-yx}.} \blue{Additional studies have proposed LCvx for orbital rendezvous operations \cite{Lu2013-fb}, and have suggested approaches to alleviate the hypotheses granting the validity of LCvx \cite{Yang2024-rp}}. 
More recent studies tackle non-convexities of mixed-integer (MI) nature and allow to constrain controls to semi-continuous sets \cite{malyuta2020} or to quantize controls \blue{for different endo-atmospheric and exo-atmospheric vehicles} \cite{Harris2021, Kazu2023, yang2024}.
Relaxation may be valid even in presence of state constraints \cite{harris2013lossless, Woodford2022-cr}. 
\blue{LCvx is further used for constrained quadrotor trajectory optimization: in this work, we consider a similar formulation as the one in \cite{Szmuk2017-ve}.} The interested reader is referred to \cite{malyuta2022convex} for an analysis of the status of LCvx. 

\blue{The  previously mentioned examples establish the equivalence between the original continuous-time (CT) non-convex OCP and its convex relaxation. However, this equivalence breaks down when the convex CT-OCP is \textit{discretized} into a discrete-time (DT) OCP using zero-order hold for practical computation; this discretization introduces a set of \textit{time grid points} that define intervals over which the control is held constant; the states and controls at these points form the solution of the DT-OCP.
The solution of the DT convex relaxation may violate the original non-convex constraints, indicating that the two discrete-time optimization problems are no longer equivalent. As a result, CT-LCvx (LCvx for CT-OCP) does not provide theoretical guarantees for its practical implementation i.e. the DT-LCvx (LCvx for DT-OCP) , and a new theoretical framework that directly addresses the discrete-time formulation is required.}

To address this issue, \cite{Luo2024-cy} provides a comprehensive analysis of the relationship between non-convex and convex DT-OCPs by proposing the Discrete-Time LCvx (DT-LCvx) method. DT-LCvx works directly on the DT non-convex problem and its convex relaxation. It offers a tight upper bound on the number of time grid points where the solution of the convex relaxation violates the original non-convex constraints. Unlike the CT version, DT-LCvx uses finite-dimensional tools such as convex analysis and linear algebra to establish its theoretical results. This distinction provides a new perspective on understanding the LCvx technique and opens up possibilities for designing novel relaxations.

\blue{\textbf{Contributions: } This work extends DT-LCvx by incorporating results from~\cite{Luo2024-cy} to address practical optimal control problems (OCPs) in linear time-invariant systems with classic control pointing and semi-continuous magnitude constraints. The pointing constraint — used to ensure safe vehicle orientation during maneuvers~\cite{acikmese2013}—is considered here in a formulation that has been experimentally validated~\cite{acikmese2013flight}. We establish theoretical guarantees for applying DT-LCvx under such pointing constraints, proving that the number of violating grid points along the trajectory is upper bounded by $n_x - 1$.}

\blue{The semi-continuous magnitude constitutes a requirement of mixed-integer structure, by which control inputs are either zero or exceed a nonzero threshold. This captures \textit{dual-mode control} encountered in, for example, on-off actuators~\cite{Yoshimura2012-to}, minimum-impulse bit systems~\cite{Malyuta2023-bh}; additional systems make use of such type of control \cite{malyuta2020, Harris2021, yang2024}. We provide theoretical guarantees similar to the pointing constraints cases, with an upper bound $2n_x - 2$, when the control input lies in $\mathbb{R}^2$. In higher dimensions, although theoretical guarantees remain unavailable, numerical results indicate that DT-LCvx still provides an effective heuristic.}


\textbf{Notation:}
The set of integers $\{a, a+1, \ldots, b\}$ is denoted by $[a,b]_{\mathbb{N}}$. $\mathbb{R}$, $\MR^n$ and $\mathbb{R}^{n \times m}$ are respectively the set of real numbers, the set of $n-$dimensional vectors, and the set of real $n$ by $m$ matrices. The cross product of two vectors is denoted as $\mathbf{a} \times \mathbf{b}$. The Euclidean norm of a vector by $\|\cdot\|$. Horizontal concatenation of matrices is denoted by $[A_1, A_2, \ldots, A_n]$. The Jacobian of a function $H$ is denoted by $\nabla H$. We denote the identity matrix by $\bm{I}$ and the zero matrix by $\bm{0}$. Given a convex set $\mathcal{S}$ and $z\in\mathcal{S}$, then $N_\mathcal{S}(z)$ denotes the convex cone normal to $\mathcal{S}$ at $z$ \cite[Chapter 2.1]{borwein2006convex}.

\section{Problem formulation}
\label{sec:2}
%
%
%
%

This paper addresses two types of lossless convexification for non-convex optimal control problems. The first problem features classical pointing constraints from \cite{acikmese2013}: \vspace{0.2cm}

\textbf{Problem 1 (P1)}:
\begin{equation*}
\label{equ:original_LCvx}
\begin{array}{ll}
\underset{x,u}{\text{minimize}}& \quad \;m\left(x_{N+1}\right) + \sum_{i=1}^N l_i\left(\norm{u_i}\right) \\
 \text{subject to} & \left|\begin{array}{l} \left.\begin{array}{l}x_{i+1} = A x_{i} + B u_i + z_i\\
\rho_\submin\leq \norm{u_i} \leq \rho_\submax\\
\xi ^{\top} u_i \geq \gamma\left\|u_i\right\| \end{array}\right| \; \forall\,i \in \intset 1 N \\[3.0ex]
\;\; x_1 = x_\subinit, \quad G\left(x_{N+1}\right) = 0 \end{array} \right.
\end{array}  
\end{equation*}
with the following solution  variables:
\begin{equation*}
\begin{aligned}
x = & \{x_1, x_2, \cdots, x_{N+1}\}, \  x_i \in \mathbb{R}^{n_x} \text{ for } i \in \intset 1 {N+1}\\
u = & \{u_1, u_2, \cdots, u_N\},\  u_i \  \in \mathbb{R}^{n_u} \text{ for } i \in \intset 1 N\\
\end{aligned}
\end{equation*}
where $x_i$, $u_i$ and $z_i\in \MR ^{n_x}$ are, respectively, the state, the control input, and the external force at time step $i$. The cost functions $\Rfunc{m}{n_x}{}$ and $\Rfunc{l_i}{}{}$ for $i \in \intset {1}{N}$ are convex $C^1$ smooth functions, defining the terminal state cost and the stepwise control cost.  $\Rfunc{G}{n_x}{n_G}$ is an affine map characterizing the boundary condition of the state trajectory. The dynamics of the system is defined by constant matrices $A \in \mathbb{R}^{n_x \times n_x}$ and $B \in \mathbb{R}^{n_x \times n_u}$, with a fixed initial state $x_{\text{init}} \in \mathbb{R}^{n_x}$. The constants $\rho_{\text{min}}, \rho_{\text{max}} \in \mathbb{R}$ bound the control input norm. The pointing constraint $\xi ^{\top} u_i \geq \gamma\left\|u_i\right\|$ restricts the angle between $u_i$ and a fixed unit vector $\xi$, with constant $\gamma >0 $. Furthermore, $l_i$ are monotonically increasing functions on the interval $[\rho_{\min}, \infty)$.

The second problem allows zero control input, as follows.  \vspace{0.2cm}

\textbf{Problem 2 (P2)}:
\label{equ:original_LCvx_mixinteger}
\begin{equation*}
\begin{array}{ll}
\underset{x,u,\theta}{\text{minimize}}& \quad \; m\left(x_{N+1}\right) + \sum_{i=1}^N l_i\left(\max(\vnorm{u_i}, \rho_\submin)\right) \\
 \text{subject to} \quad & \left| \begin{array}{l} \left. \begin{array}{l} x_{i+1} = A x_{i} + B u_i + z_i\\
\theta_i \rho_\submin\leq \vnorm{u_i} \leq  \theta_i \rho_\submax\\
\xi ^{\top} u_i \geq \gamma\left\|u_i\right\|\\
\theta_i \in \{0,1\}\ ,u_i \in \mathbb{R}^2\end{array}\right|\;\forall\,i \in \intset 1 {N} \\[4.5ex]
\;\; x_1 = x_\subinit, \quad G\left(x_{N+1}\right) = 0\quad \gamma>0 \end{array}\right. \\ 
\end{array}  
\end{equation*}

The key difference between $P1$ and $P2$ is the introduction of a switching variable $\theta_i \in \{0, 1\}$. Each $\theta_i$ represents a control mode: $\theta_i = 1$ indicates that the control input $u_i$ is active and bounded within $[\rho_\submin, \rho_\submax]$, while $\theta_i = 0$ indicates that $u_i$ is inactive and set to zero. This formulation is particularly useful for representing on-off control systems and extends the feasible region of $P1$. The objective function uses $l_i\left(\max(\|u_i\|, \rho_\text{min})\right)$, which serves as an approximation of $l_i\left(\|u_i\|\right)$. Furthermore, $u_i \in \mathbb{R}^2$ for $P2$, whereas the dimension of $u_i$ is not prescribed in $P1$.

Both $P1$ and $P2$ are non-convex and challenging to solve numerically. We thus construct convex relaxations whose solutions satisfy the original nonconvex constraints at almost all time grid points.

For $P1$, we apply the standard LCvx for the pointing constraints \cite{acikmese2013}. The resulting convexified problem reads: \vspace{0.2cm}

\textbf{Problem 3 (P3)}:
\label{equ:convexified_original_LCvx}
\begin{equation*}
\begin{array}{ll}
\underset{x, u, \sigma}{\text{minimize}} & \quad \;m\left(x_{N+1}\right) + \sum_{i=1}^N l_i\left(\sigma_i\right) \\ 
\text{subject to}  & \left|\begin{array}{l}  \left.\begin{array}{l} x_{i+1} = A x_{i} + B u_i + z_i \\ 
 \rho_\submin \leq \sigma_i \leq \rho_\submax \\ 
\xi^{\top} u_i \geq \gamma \sigma_i \\ 
 \sigma_i \geq \|u_i\| \end{array} \right| \forall \, i \in \intset{1}{N} \\[4.5ex] 
\;\; x_1 = x_\subinit, \quad G\left(x_{N+1}\right) = 0 \end{array} \right.
\end{array}
\end{equation*}
Here, $\sigma_i \in \mathbb{R}$ for all $i$, and we define $\sigma = \{\sigma_1, \sigma_2, \cdots, \sigma_N\}$. All other variables and coefficients are defined as in $P1$.

$P2$ is an MI optimization problem that cannot be solved directly using continuous optimization techniques. To address this, we recast $P2$ as follows. \vspace{0.2cm}

\textbf{Problem 4 (P4)}:
\label{equ:convexified_original_LCvx2}
\begin{equation*}
\begin{array}{ll}
\underset{x, u, \sigma}{\text{minimize}} & \quad \; m\left(x_{N+1}\right) + \sum_{i=1}^N l_i\left(\sigma_i\right) \\ 
\text{subject to} & \left| \begin{array}{l} \left. \begin{array}{l} x_{i+1} = A x_{i} + B u_i + z_i \\ 
\rho_\submin \leq \sigma_i \leq \rho_\submax \\ 
\xi^{\top} u_i \geq \gamma \|u_i\| \\ 
\sigma_i \geq \|u_i\|, \ u_i \in \mathbb{R}^2 \end{array} \right| \forall \,i \in \intset{1}{N}\\[4.5ex] 
 \;\;x_1 = x_\subinit, \quad G\left(x_{N+1}\right) = 0\quad \gamma>0 \end{array}\right.
\end{array}
\end{equation*}

The feasible region for $u_i$, given a specific value of $\sigma_i$, is a sector-shaped region, as shown in Fig.~\ref{fig:contr_4}. \blue{ The directions \( OA \) and \( OB \) are the only two whose inner product with \( {\xi} \) equals \( \gamma \). Let \( {\xi}_1 \) and \( {\xi}_2 \) denote the unit vectors perpendicular to \( OA \) and \( OB \), respectively, and both have negative inner product with \( {\xi} \). The inner product between \( OA \) and \( OB \) is positive, and the direction from \( OA \) to \( OB \) is clockwise. The feasible region is the sector from \( OA \) to \( OB \) in the clockwise direction.}

\blue{Note that the objective of $P4$ involves $l_i(\sigma_i)$, which is lower bounded by $l_i(\rho_{\min})$. Consequently, the cost of $P4$  is insensitive to any control input below $\rho_{\min}$, which necessitates using $l_i\left(\max\left(\|u_i\|, \rho_{\min}\right)\right)$ in $P2$ instead of $l_i(\|u_i\|)$ to ensure the validity of LCvx.
}
\begin{figure}[tp]
\centering
    \begin{subfigure}[t]{0.40\columnwidth}
        \centering
        \begin{adjustbox}{width=\textwidth}
       \begin{tikzpicture}
        \draw[fill=ultramarine!80,opacity=0.50] (-1,0) arc (120:60:2cm and 2cm) -- cycle;
        \node(origin) at (0, -2.2) {\scriptsize O};
        \node(originb) at (-1.2, 0) {\scriptsize A};
        \node(origina) at (1.2, 0) {\scriptsize B};
        \draw[->] (origin) to (0, 0.8);
        \draw[-] (origina) to (originb);
        \draw[-] (-1,0) arc (120:60:2cm and 2cm);
        \draw[dashed] (origina.west) to (origin.north);
        \draw[dashed] (originb.east) to (origin.north);
        \node(lab1) at (0.2, 0.8) {\scriptsize $\xi$};
    \end{tikzpicture}
    \end{adjustbox}
    \caption{$P$3 -- 2D case}
    \label{fig:contr_3}
    \end{subfigure}
    \hspace{0.1\columnwidth}
    \begin{subfigure}[t]{0.40\columnwidth}
    \centering
        \begin{adjustbox}{width=\textwidth}
       \begin{tikzpicture}
        \draw[color=white!00,fill=ultramarine!80,opacity=0.50] (-1,0) arc (120:60:2cm and 2cm) -- (0, -2) -- cycle;
        \node(origin) at (0, -2.2) {\scriptsize O};
        \node(originb) at (-1.2, 0) {\scriptsize A};
        \node(origina) at (1.2, 0) {\scriptsize B};
        \draw[->] (origin) to (0, 0.8);
        \draw[dashed] (origina) to (originb);
        \draw[-] (-1,0) arc (120:60:2cm and 2cm);
        \node(ob) at ($ (-0.6,0)!0.5!(0,-2) $) {};
        \node(oa) at ($ (0.6,0)!0.5!(0,-2) $) {};
        \draw[->] (ob) to (-1.3,-1.55);
        \draw[->] (oa) to (1.3,-1.55);
        \draw[-] (origina.west) to (origin.north);
        \draw[-] (originb.east) to (origin.north);
        \node(lab1) at (0.2, 0.8) {\scriptsize $\xi$};
        \node(lab1) at (1.2, -1.20) {\scriptsize $\xi_2$};
        \node(lab1) at (-1.2, -1.2) {\scriptsize $\xi_1$};
    \end{tikzpicture}
        \end{adjustbox}
    \caption{$P$4}
    \label{fig:contr_4}
    \end{subfigure}
    \caption{Admissible control sets for a given $\sigma$ }
\end{figure}

$P3$ and $P4$ can be consistently reformulated as: \vspace{0.2cm}

\textbf{Problem 5 (P5)}:
\label{equ:convexified_simplified_LCvx}
\begin{equation*}
\begin{array}{ll}
\underset{x, u, \sigma}{\text{minimize}} &  \quad \; m\left(x_{N+1}\right) + \sum_{i=1}^N \left[l_i\left(\sigma_i\right) + I_V( u_i, \sigma_i)\right] \\ 
\text{subject to}& \left| \begin{array}{l} x_{i+1} = A x_{i} + B u_i + z_i, \quad \forall i \in \intset{1}{N} \\  
x_1 = x_\subinit, \quad G\left(x_{N+1}\right) = 0  \end{array}\right.
\end{array}
\end{equation*}
where the \textit{indicator function} $I_V(u, \sigma)$ of the convex set $V$ is 0 for $(u, \sigma) \in V$ and $+\infty$ otherwise \cite{borwein2006convex}. 
\if\toggleFScomm1
\blue{The expressions of $V$ for $P3$ and for $P4$ follow.}
\else
The sets $V$ for $P3$ are defined as:
\fi
\begin{equation*}
\begin{aligned}
V_{P3} &= \left\{ (\bar{u}, \bar{\sigma}) \in \mathbb{R}^{n_u} \times \mathbb{R} \,\middle|\,
\begin{array}{l}
\rho_{\min} \leq \bar{\sigma} \leq \rho_{\max}, \\
\xi^\top \bar{u} \geq \gamma \bar{\sigma},\quad
\bar{\sigma} \geq \|\bar{u}\|
\end{array}
\right\}, \\
V_{P4} &= \left\{ (\bar{u}, \bar{\sigma}) \in \mathbb{R}^2 \times \mathbb{R} \,\middle|\,
\begin{array}{l}
\rho_{\min} \leq \bar{\sigma} \leq \rho_{\max}, \\
\xi^\top \bar{u} \geq \gamma \|\bar{u}\|,\quad
\bar{\sigma} \geq \|\bar{u}\|
\end{array}
\right\}.
\end{aligned}
\end{equation*}

Furthermore, we set
\begin{equation*}
\begin{aligned}
V_{P3}(\bar{\sigma}) &= \left\{ \bar{u} \in \mathbb{R}^{n_u} \;\middle|\; (\bar{u}, \bar{\sigma}) \in V_{P3} \right\}, \\
V_{P4}(\bar{\sigma}) &= \left\{ \bar{u} \in \mathbb{R}^{n_u} \;\middle|\; (\bar{u}, \bar{\sigma}) \in V_{P4} \right\}.
\end{aligned}
\end{equation*}

After removing redundant boundary conditions, the affine equality constraints can be combined into a single affine constraint \( H(x, u) = 0 \), with Jacobian \( \nabla H \). We then make the following assumption:
\begin{assumption}
\label{assumption:jacobian H}
$\nabla H$ is full rank.
\end{assumption}

We define LCvx as \textit{valid} for \( P3 \) at the time grid point \( i \) if the optimal control \( u_i^* \) satisfies the control constraints in \( P1 \). \if\toggleFScomm1
\blue{Similarly, LCvx is \textit{valid} for \( P4 \) at the time grid point \( i \) if the optimal solution \( u_i^* \) satisfies the control constraints in \( P2 \) for either $\theta_i=1$ or $\theta_i=0$.}
\fi

\blue{Let \( (\bar{u}, \bar{\sigma}) \) denote the control at a generic time grid point~\( i \). We now provide a sufficient condition for the validity of LCvx for \( P3 \) and \( P4 \) at \( i \). The following lemma highlights a key difference between CT-LCvx and DT-LCvx: the "if" part always holds in CT-LCvx~\cite{acikmese2013}, but not in DT-LCvx.}


\begin{lemma}[LCvx Sufficient Condition]
\label{lemma:normalcone}
For a point \((\bar u, \bar \sigma) \in V_{P3}\), LCvx is valid if 
\[
N_{V_{P3}(\bar \sigma)}(\bar u) \not\subseteq \{\lambda \xi  \mid \lambda \in \mathbb{R}\}.
\]
Similarly, for a point \((\bar u, \bar \sigma) \in V_{P4}\), LCvx is valid if 
\[
N_{V_{P4}(\bar \sigma)}(\bar u) \not\subseteq \{\lambda \xi_1 \mid \lambda \in \mathbb{R}\} \cup \{\lambda \xi_2 \mid \lambda \in \mathbb{R}\}.
\]
\end{lemma}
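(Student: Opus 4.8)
The plan is to prove the contrapositive in both cases, reducing the geometric hypothesis on the slice normal cone to the single statement that the ball constraint $\|\bar u\|\le\bar\sigma$ is active at $\bar u$. First I would note that every $(\bar u,\bar\sigma)\in V_{P3}$ already satisfies $\rho_{\min}\le\bar\sigma\le\rho_{\max}$ and $\xi^\top\bar u\ge\gamma\bar\sigma$, so the only $P1$ constraint that can fail is the lower magnitude bound; consequently, if $\|\bar u\|=\bar\sigma$ then $\rho_{\min}\le\|\bar u\|\le\rho_{\max}$ and $\xi^\top\bar u\ge\gamma\bar\sigma=\gamma\|\bar u\|$, so $\bar u$ is feasible for $P1$ and LCvx is valid. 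Hence for $P3$ it suffices to show that the hypothesis $N_{V_{P3}(\bar\sigma)}(\bar u)\not\subseteq\{\lambda\xi\mid\lambda\in\mathbb{R}\}$ forces $\|\bar u\|=\bar\sigma$. For $P4$ the same reduction applies, except that validity also permits $\bar u=0$ (the $\theta_i=0$ mode), so there I would reduce validity to ``$\|\bar u\|=\bar\sigma$ or $\bar u=0$''.

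The core step is to compute the slice normal cone from the active constraints. I would write $V_{P3}(\bar\sigma)$ as the intersection of the half-space $\{u\mid\xi^\top u\ge\gamma\bar\sigma\}$ and the ball $\{u\mid\|u\|\le\bar\sigma\}$; since these convex sets meet a constraint qualification, $N_{V_{P3}(\bar\sigma)}(\bar u)$ is the cone generated by the outward gradients of the active constraints, namely $-\xi$ for the half-space and $\bar u/\|\bar u\|$ for the ball. If the ball is inactive, i.e. $\|\bar u\|<\bar\sigma$, only the half-space can contribute, so $N_{V_{P3}(\bar\sigma)}(\bar u)\subseteq\{-\mu\xi\mid\mu\ge0\}\subseteq\{\lambda\xi\mid\lambda\in\mathbb{R}\}$, and the contrapositive is exactly the claim. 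For $P4$ the ball gradient is unchanged, but the pointing constraint becomes the cone $\{u\mid\xi^\top u\ge\gamma\|u\|\}$, whose boundary is the two rays $OA,OB$; along $OA$ the outward gradient is proportional to $\gamma\,(\bar u/\|\bar u\|)-\xi$, which is orthogonal to $OA$ and satisfies $(\gamma\,\bar u/\|\bar u\|-\xi)^\top\xi=\gamma^2-1<0$ for a nondegenerate pointing cone, so it is a positive multiple of $\xi_1$ (and similarly $\xi_2$ along $OB$). Thus with the ball inactive and $\bar u\neq0$ the slice normal cone lies in $\{\lambda\xi_1\mid\lambda\in\mathbb{R}\}\cup\{\lambda\xi_2\mid\lambda\in\mathbb{R}\}$, again giving the contrapositive.

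The step I expect to be the main obstacle is the apex $\bar u=0$ of the $P4$ sector, where $\|u\|$ is nondifferentiable and the single-gradient description breaks down. There I would instead identify $N_{V_{P4}(\bar\sigma)}(0)$ with the polar of the sector cone, i.e. the two-dimensional cone generated by $\xi_1$ and $\xi_2$; this is not contained in $\{\lambda\xi_1\mid\lambda\in\mathbb{R}\}\cup\{\lambda\xi_2\mid\lambda\in\mathbb{R}\}$, so the hypothesis holds at the apex, consistent with $\bar u=0$ being trivially valid. The only remaining technical point is the constraint qualification that makes the normal cone equal to the cone of active gradients; this is routine here because the relative interiors of the half-space (resp.\ cone) and the ball intersect, and in any case the contrapositive only ever invokes the easy inactive-ball configurations, keeping the argument elementary.
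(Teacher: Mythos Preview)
Your proposal is correct and follows essentially the same contrapositive route as the paper: enumerate the configurations in which LCvx fails (interior or base of the cap for $P3$; interior or the open segments $OA$, $OB$ for $P4$), and observe that in each such configuration the slice normal cone is $\{0\}$ or lies in the relevant line(s). Your version is somewhat more explicit---computing the outward gradient $\gamma\,\bar u/\|\bar u\|-\xi$ along the cone boundary and treating the apex $\bar u=0$ separately via the polar cone---whereas the paper simply states the normal cones at each boundary piece; the underlying argument is the same.
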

\begin{proof}
We first prove the statement for $P3$. The feasible region for \(\bar{u}\), given $\bar \sigma$, is a high-dimensional spherical cap with base perpendicular to \(\xi\). Fig.~\ref{fig:contr_3} illustrates the 2D case. LCvx holds (a) if \(\bar{u}\) is not in the interior of \(V_{P3}(\bar{\sigma})\), whose normal cone is \(\{0\}\), and (b) if \(\bar{u}\) is not on the base of the cap (the segment AB in the 2D case), whose normal cone is a subset of \(\{\lambda \xi \mid \lambda \in \mathbb{R}\}\). Thus, LCvx holds if \(N_{V_{P3}(\bar{\sigma})}(\bar{u}) \nsubseteq \{\lambda \xi \mid \lambda \in \mathbb{R}\}\).  

Next we show the lemma for \(P4\). Let us refer to Fig.~\ref{fig:contr_4}. There are two scenarios in which the constraints in \(P4\) can be violated: (a) \(\bar u\) is in the interior of \(V_{P4}(\bar \sigma)\), or (b) \(\bar u\) lies on the line segment \(OA\) or \(OB\), but not at points \(O, A,\) or \(B\).

If (a) happens, then \(N_{V_{P4}(\bar \sigma)}(\bar u) = \{0\}\)~\cite[Corollary 6.44]{bauschke2011convex}. If (b) happens, then \(N_{V_{P4}(\bar \sigma)}(\bar u)\) is \(\{\lambda \xi_1 \mid \lambda \in \mathbb{R}_+\}\) or \(\{\lambda \xi_2 \mid \lambda \in \mathbb{R}_+\}\).  
Thus, LCvx holds if 
\[
N_{V_{P4}(\bar \sigma)}(\bar u)\nsubseteq \{\lambda \xi_1 \mid \lambda \in \mathbb{R}\} \cup \{\lambda \xi_2 \mid \lambda \in \mathbb{R}\}.
\] 
\end{proof}

\section{Theoretical guarantee for LCvx Validity}
\label{sec:prooflcvx}

In this section, we formally establish the validity of LCvx for the solutions of $P3$ and $P4$ and derive an upper bound on the number of temporal grid points where the optimal solutions to $P3$ and $P4$ may violate the non-convex constraints of $P1$ and $P2$ respectively.

We begin by assuming that Slater's condition holds for $P5$, which also holds for both $P3$ and $P4$.

\begin{assumption}[Slater's Condition]
\label{assumption: slater}
For $P5$, there exists a combination of $(x, u, \sigma)$ such that $( u_i, \sigma_i)$ lies in the interior of the set $V$ for all $i$ and $(x_i, u_i)$ satisfies all affine constraints.
\end{assumption}

Slater's condition ensures the validity of the following theorem for the Lagrangian defined as $L(x, u, \sigma; \eta, \mu_1, \mu_2) := $
\begin{equation*}
\begin{aligned}
 & \, m\left(x_{N+1}\right) + \sum_{i=1}^{N} \left[l_i\left(\sigma_i\right) + I_{V}\left(u_i, \sigma_i\right)\right] + \mu_1^\top\left(x_1 - x_{\text{init}}\right)\\
 &+ \mu_2^\top G\left(x_{N+1}\right)+ \sum_{i=1}^{N} \eta_i^\top(-x_{i+1} + A x_i + B u_i + z_i) \\
\end{aligned}
\end{equation*}
Here, the dual variables are defined as $\eta_i \in \mathbb{R}^{n_x}$ for $i = 1, 2, \cdots, N$, and they are concatenated to form the vector $\eta = [\eta_1^\top, \eta_2^\top, \cdots, \eta_N^\top]^\top \in \mathbb{R}^{n_x N}$. Furthermore, $\mu_1 \in \mathbb{R}^{n_x}$ and $\mu_2 \in \mathbb{R}^{n_G}$ are the dual variables associated with the initial state and the boundary constraints, respectively.


\begin{theorem}[Theorem~9.4, 9.8, and 9.14 in \cite{clarke2013functional}]
\label{thm: lagrangian}
Given a solution $(x^*, u^*, \sigma^*)$ of P5 and Assumption \ref{assumption: slater}, there exist dual variables $(\eta^*, \mu_1^*, \mu_2^*)$ such that
\begin{equation}
\label{equ: min lagrangian}
\left(x^*, u^*, \sigma^* \right)= \operatorname{arg} \min_{x, u, \sigma} L\left(x, u, \sigma; \eta^*, \mu_1^*, \mu_2^*\right).
\end{equation}
\end{theorem}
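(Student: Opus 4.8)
The plan is to derive the statement from strong convex duality. First I would confirm that $P5$ is a convex program. The objective $m(x_{N+1}) + \sum_{i=1}^N [l_i(\sigma_i) + I_V(u_i, \sigma_i)]$ is an extended-real-valued convex function: $m$ and each $l_i$ are convex by hypothesis, and each $I_V$ is the indicator of a convex set $V$ (either $V_{P3}$ or $V_{P4}$, both of which are intersections of a box in $\sigma$, a half-space, an ice-cream-type cone, and a second-order cone, hence convex). The remaining constraints---the dynamics, the initial condition, and $G(x_{N+1})=0$---are affine and were already collected into $H(x,u)=0$. Consequently, for any fixed dual $(\eta,\mu_1,\mu_2)$, the Lagrangian $L(\,\cdot\,;\eta,\mu_1,\mu_2)$ is convex in $(x,u,\sigma)$, since it only adds terms linear in the dualized affine constraints to the convex objective.

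Next I would invoke the convex Lagrange-multiplier (saddle-point) theorem. Assumption~\ref{assumption: slater} supplies a feasible triple for which every $(u_i,\sigma_i)$ lies in the interior of $V$; this is exactly the constraint qualification needed to close the duality gap and to guarantee attainment of the dual optimum. Because every dualized constraint is affine, no further qualification is required for them---only the set memberships $(u_i,\sigma_i)\in V$ need one, and the interior condition of Assumption~\ref{assumption: slater} provides it. The multiplier theorem then produces dual variables $(\eta^*,\mu_1^*,\mu_2^*)$ for which the primal-dual pair is a saddle point of $L$.

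The saddle-point property yields the conclusion directly. By definition, at a saddle point $L(x,u,\sigma;\eta^*,\mu_1^*,\mu_2^*) \ge L(x^*,u^*,\sigma^*;\eta^*,\mu_1^*,\mu_2^*)$ for every primal $(x,u,\sigma)$, which is precisely the minimization identity claimed in the theorem. Equivalently, one may argue through stationarity: optimality forces $0 \in \partial_{(x,u,\sigma)} L(x^*,u^*,\sigma^*;\eta^*,\mu_1^*,\mu_2^*)$, and since $L$ is convex in the primal variables this subgradient inclusion characterizes a global minimizer.

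I expect the main subtlety to be the extended-valued, nonsmooth nature of the objective induced by the indicator functions. Because $I_V$ is not differentiable, the multiplier rule must be phrased with subdifferentials, and I would need to verify that the Slater point lies in the interior of the effective domain of the objective so that the subdifferential sum rule applies and the zero-gap conclusion holds without qualification obstructions. The affine structure of all dualized constraints keeps this manageable, which is why it is appropriate to cite the convex-duality machinery of \cite{clarke2013functional} rather than reprove the multiplier theorem from scratch.
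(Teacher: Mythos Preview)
Your proposal is correct and matches the paper's approach: the paper does not give its own proof but simply cites the convex Lagrange-multiplier/saddle-point theorems in Clarke's text, and your sketch is precisely an unpacking of that cited machinery (convexity of $P5$, Slater's condition for the indicator sets, strong duality, saddle-point implies the primal minimization identity). There is nothing to add beyond noting that the paper treats this as a quoted result rather than proving it in-line.
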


From Eq. \eqref{equ: min lagrangian}, we derive:
\begin{align}
  &  \sigma^* = \operatorname{arg} \min_\sigma L\left(x^*, u^*, \sigma; \eta^*, \mu_1^*, \mu_2^*\right),  \\
  & u^*= \operatorname{arg} \min_u L\left(x^*,u, \sigma^*; \eta^*, \mu_1^*, \mu_2^*\right), \label{minimization on u} \\
  & x^* = \operatorname{arg} \min_x L\left(x, u^*, \sigma^*; \eta^*, \mu_1^*, \mu_2^*\right). \label{minimization on x}
\end{align}
Control $u_i$ only appears in terms $I_V(u_i, \sigma_i)$ and $\eta_i^\top B u_i$ in the Lagrangian $L$; we have:

\begin{equation}
\setlength{\arraycolsep}{1.5pt}
\begin{array}{rl}
u_i^* & \displaystyle =\arg \min _{u_i} I_V\left(u_i, \sigma_i^*\right)+\eta_i^{* \top} B u_i \\[1.5ex]
& \displaystyle =\arg \min _{u_i} I_{V\left(\sigma_i^*\right)}\left(u_i\right)+\eta_i^{* \top} B u_i
\end{array}
\label{equ:ui}
\end{equation}
 where the set $V(\sigma_i^*)$ is defined as 
$$
V(\sigma_i^*)= \{\Bar{u} \in \mathbb{R}^{n_u} \;\;| \;\; (\Bar{u}, \sigma_i^*) \in V \}.
$$
\begin{theorem}[Theorem 10 of \cite{Luo2024-cy}]
\label{theorem: LCvx lambda B}
Under Assumption \ref{assumption: slater} for $P5$, we have 
$\eta_i^* = A^{\top (N-i)} \eta_N$ and $-B^{\top}\eta_i ^{*} \in N_{V(\sigma_i^*)}\left(u_i^*\right).$
\end{theorem}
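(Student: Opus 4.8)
The plan is to obtain the two claims separately from the componentwise stationarity of the Lagrangian guaranteed by Theorem~\ref{thm: lagrangian}, exploiting that $L$ decouples cleanly across the decision variables.

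For the costate recursion $\eta_i^* = A^{\top(N-i)}\eta_N^*$, I would use the minimization over $x$ in Eq.~\eqref{minimization on x}. Note first that $L$ is convex in $x$: the dynamics terms are affine in $x$, $m$ is convex $C^1$, and both $\mu_2^\top G(x_{N+1})$ and $\mu_1^\top(x_1 - x_{\text{init}})$ are affine. Hence the stationarity condition $\nabla_{x_j} L = 0$ is both necessary and sufficient at the optimum. For an interior index $j \in \intset{2}{N}$, the state $x_j$ enters $L$ only through the dynamics sum: once as $-x_{i+1}$ with $i = j-1$, and once as $A x_i$ with $i = j$. Differentiating gives $-\eta_{j-1}^* + A^\top \eta_j^* = 0$, i.e.\ $\eta_{j-1}^* = A^\top \eta_j^*$; equivalently $\eta_i^* = A^\top \eta_{i+1}^*$ for $i \in \intset{1}{N-1}$. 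Composing this relation over successive indices up to $N$ yields $\eta_i^* = A^{\top(N-i)}\eta_N^*$. The boundary indices $j=1$ and $j=N+1$ contribute only relations that pin down $\mu_1^*$ and a transversality condition involving $\nabla m(x_{N+1}^*)$ and $\mu_2^*$, neither of which is needed for the claim.

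For the normal-cone inclusion, I would start directly from the already-derived Eq.~\eqref{equ:ui}, which states that $u_i^*$ minimizes $f(u_i) := I_{V(\sigma_i^*)}(u_i) + \eta_i^{*\top} B u_i$. This is the sum of the indicator of the closed convex set $V(\sigma_i^*)$ and an everywhere-differentiable linear term. Applying the first-order optimality condition for convex functions together with the subdifferential sum rule — valid precisely because the linear summand is smooth — gives $0 \in \partial I_{V(\sigma_i^*)}(u_i^*) + B^\top \eta_i^*$. Invoking the standard identity $\partial I_{V(\sigma_i^*)}(u_i^*) = N_{V(\sigma_i^*)}(u_i^*)$ and rearranging produces $-B^\top \eta_i^* \in N_{V(\sigma_i^*)}(u_i^*)$, which is the second assertion.

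The main obstacle is less in the calculus itself than in the bookkeeping: isolating exactly which terms of $L$ depend on each $x_j$ so that the interior gradient decouples without interference from the terminal and initial boundary terms, and verifying the regularity that licenses the two subdifferential identities (closedness and convexity of $V(\sigma_i^*)$, differentiability of the linear term). Once these are in place, both statements reduce to elementary convex first-order conditions, which is why the result can be imported directly from \cite{Luo2024-cy}.
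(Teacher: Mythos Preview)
Your proposal is correct and follows essentially the same route as the paper: both derive the costate recursion by differentiating $L$ with respect to the interior states $x_j$ from Eq.~\eqref{minimization on x}, and both obtain the normal-cone inclusion from the first-order optimality condition for the convex minimization in Eq.~\eqref{equ:ui}. The paper is terser---it cites \cite[Proposition~2.1.2]{borwein2006convex} directly rather than unpacking the sum rule and the identity $\partial I_{V(\sigma_i^*)} = N_{V(\sigma_i^*)}$---but the substance is identical.
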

\begin{proof}
 Given that the right-hand side function of Eq. \eqref{equ:ui} is convex in \( u_i \) and \( u_i^* \) is a minimizer of this convex function, it follows that $
-B^{\top}\eta_i ^{*} \in N_{V(\sigma_i^*)}\left(u_i^*\right)
$ \cite[Proposition 2.1.2]{borwein2006convex}. 

For the evolution of \( \eta_i \), considering Eq. \eqref{minimization on x}, we take the partial derivative of \( L \) with respect to \( x_i \) for all \( i \), yielding:
\begin{subequations}
\begin{align}
x_{N+1}: & \quad \nabla m\left(x_{N+1}\right) + \mu_2^\top \nabla G\left(x_{N+1}\right) = \eta^*_N, \label{eq:xN}\\
x_i: & \quad \eta_i^{*\top} A - \eta_{i-1}^{*\top} = 0, \label{eq:xi}\\
x_1: & \quad \mu_1^\top + \eta_1^{*\top} A = 0. \label{eq:x0}
\end{align}
\end{subequations}
Iteratively applying Eq. \eqref{eq:xi} leads to  \( \eta^*_i = A^{\top(N-i)} \eta^*_N \). 
\end{proof}



Hence, we have the following sufficient condition for LCvx for a specific grid point $i$.

\begin{theorem}
\label{thm: LCvx sufficient}
LCvx is valid for $P3$ at time grid point $i$ if 
\[
-B^{\top} \eta_i^{*} \neq \lambda \xi , \quad \text{for all } \lambda \in \mathbb{R}.
\]
Similarly, LCvx is valid for $P4$ at time grid point $i$ if 
\[
-B^{\top} \eta_i^{*} \neq \lambda \xi_1 \; \wedge \; -B^{\top} \eta_i^{*} \neq \lambda \xi_2, \quad \text{for all } \lambda \in \mathbb{R}.
\]
\end{theorem}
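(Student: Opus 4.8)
The plan is to combine the normal-cone membership established in Theorem~\ref{theorem: LCvx lambda B} with the geometric sufficient condition of Lemma~\ref{lemma:normalcone}, using the computed dual vector as an explicit ``witness.'' The whole argument reduces to the elementary observation that a set $S$ fails to be a subset of $T$ precisely when $S$ contains at least one point lying outside $T$. The starting point is that Theorem~\ref{theorem: LCvx lambda B} guarantees $-B^\top \eta_i^* \in N_{V(\sigma_i^*)}(u_i^*)$ at every grid point $i$, where $V$ denotes $V_{P3}$ or $V_{P4}$ as appropriate; this places the explicit, computable vector $-B^\top\eta_i^*$ inside the relevant normal cone.

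For $P3$, I would assume the hypothesis $-B^\top\eta_i^* \neq \lambda\xi$ for every $\lambda \in \mathbb{R}$, i.e. $-B^\top\eta_i^* \notin \{\lambda\xi \mid \lambda \in \mathbb{R}\}$. Since $-B^\top\eta_i^*$ nonetheless lies in $N_{V_{P3}(\sigma_i^*)}(u_i^*)$, this single vector witnesses that the normal cone is not contained in the line $\{\lambda\xi \mid \lambda \in \mathbb{R}\}$, so that $N_{V_{P3}(\sigma_i^*)}(u_i^*) \nsubseteq \{\lambda\xi \mid \lambda \in \mathbb{R}\}$; invoking Lemma~\ref{lemma:normalcone} then yields validity of LCvx for $P3$ at $i$. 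For $P4$ I would run the identical argument against a union of two lines: the hypothesis gives $-B^\top\eta_i^* \notin \{\lambda\xi_1 \mid \lambda \in \mathbb{R}\} \cup \{\lambda\xi_2 \mid \lambda \in \mathbb{R}\}$, and combined with the membership $-B^\top\eta_i^* \in N_{V_{P4}(\sigma_i^*)}(u_i^*)$ this shows the cone is not a subset of that union, so Lemma~\ref{lemma:normalcone} closes the case.

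The ``main obstacle'' here is conceptual rather than computational: the key is to recognize that Theorem~\ref{theorem: LCvx lambda B} has already performed the essential work by locating $-B^\top\eta_i^*$ inside the normal cone, so that the dual-variable hypothesis of the present theorem translates directly into the ``not a subset'' condition required by Lemma~\ref{lemma:normalcone}. No further case analysis on the geometry of $V_{P3}$ or $V_{P4}$, and no additional estimates, are needed; the only point demanding care is to confirm that the set $V(\sigma_i^*)$ appearing in Theorem~\ref{theorem: LCvx lambda B} is instantiated as $V_{P3}(\sigma_i^*)$ in the first case and $V_{P4}(\sigma_i^*)$ in the second, which is immediate from the definition of $P5$ as the common reformulation of $P3$ and $P4$.
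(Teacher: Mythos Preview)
Your proposal is correct and follows exactly the paper's approach: the paper's own proof simply states that the theorem is a direct consequence of Lemma~\ref{lemma:normalcone} and Theorem~\ref{theorem: LCvx lambda B}, and your write-up merely unpacks that implication in detail.
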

\begin{proof}
    This theorem holds as a direct consequence of Lemma \ref{lemma:normalcone} and Theorem \ref{theorem: LCvx lambda B}. 
\end{proof}

%


We now analyze the validity of LCvx for the full trajectory of $P3$ and $P4$, beginning with the definition of controllability.

\begin{definition}[Controllability]
\label{definition:controllability}
The matrix pair \( (A, B) \), where \( A \in \mathbb{R}^{n_x \times n_x} \) and \( B \in \mathbb{R}^{n_x \times n_u} \), is said to be controllable if the controllability matrix 
\[
\mathcal{C} = \begin{bmatrix}
B & AB & A^2 B & \cdots & A^{n_x-1} B
\end{bmatrix}
\]
has full row rank, i.e. $\operatorname{rank}(\mathcal{C}) = n_x.$

\end{definition}

%
\blue{We use the following controllability assumption for $P3$ and $P4$, which appears in CT-LCvx \cite{acikmese2013} and is verifiable offline.}

\begin{assumption}[Controllability for $P3$ and $P4$]
\label{assumption:controllability1}
In $P3$, let $M \in \mathbb{R}^{n_u \times (n_u - 1)}$ be a matrix whose columns form a basis for the orthogonal complement of $\xi$; the pair $\{A, BM\}$ is controllable. In $P4$, let $w_1, w_2 \in \mathbb{R}^2$ be unit vectors orthogonal to $\xi_1$ and $\xi_2$, respectively; both pairs $\{A, Bw_1\}$ and $\{A, Bw_2\}$ are controllable.
\end{assumption}

\blue{The following theorem for \( P3 \) motivates Assumption~\ref{assumption:controllability1}.}

\begin{theorem}
\label{thm: last n}
Suppose Assumptions \ref{assumption: slater} and \ref{assumption:controllability1} hold. \blue{If $\eta_N\neq 0, $  the optimal solution to $P3$ does not violate LCvx at all of the last $n_x$ time grid points.}
\end{theorem}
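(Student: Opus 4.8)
The plan is to argue by contraposition: I will show that if LCvx is violated at \emph{every} one of the last $n_x$ grid points $i \in \intset{N-n_x+1}{N}$, then necessarily $\eta_N = 0$, contradicting the hypothesis. By Theorem~\ref{thm: LCvx sufficient}, LCvx failing at grid point $i$ for $P3$ means $-B^\top \eta_i^* = \lambda_i\,\xi$ for some $\lambda_i \in \mathbb{R}$ (possibly with $\lambda_i = 0$), i.e. $B^\top \eta_i^* \in \operatorname{span}\{\xi\}$. Substituting the costate propagation $\eta_i^* = A^{\top(N-i)}\eta_N$ from Theorem~\ref{theorem: LCvx lambda B}, the blanket-violation assumption becomes $B^\top A^{\top k}\eta_N \in \operatorname{span}\{\xi\}$ for every $k = N-i \in \intset{0}{n_x-1}$.

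The key step is to project these $n_x$ membership conditions onto the orthogonal complement of $\xi$ in order to eliminate the unknown multipliers $\lambda_i$. Let $M$ be the matrix from Assumption~\ref{assumption:controllability1} whose columns form a basis of $\xi^\perp$, so that $M^\top \xi = 0$. Left-multiplying each condition by $M^\top$ gives $M^\top B^\top A^{\top k}\eta_N = 0$, equivalently $\eta_N^\top A^k B M = 0$, for all $k \in \intset{0}{n_x-1}$. Stacking these identities horizontally yields $\eta_N^\top \begin{bmatrix} BM & ABM & \cdots & A^{n_x-1}BM \end{bmatrix} = 0$, where the bracketed matrix is exactly the controllability matrix of the pair $\{A, BM\}$.

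To conclude, I invoke Assumption~\ref{assumption:controllability1}, which guarantees that $\{A, BM\}$ is controllable, so its controllability matrix has full row rank $n_x$ by Definition~\ref{definition:controllability}. A vector $\eta_N$ lying in the left null space of a matrix of full row rank $n_x$ must vanish, hence $\eta_N = 0$, contradicting $\eta_N \neq 0$. Therefore LCvx cannot fail simultaneously at all of the last $n_x$ grid points, which is the claim.

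I expect the main obstacle to be conceptual rather than computational: recognizing that the right way to strip off the step-dependent multipliers $\lambda_i$ is to project along $\xi^\perp$ via $M$, thereby converting the chain of span-inclusion conditions into a single left-null-space condition on the controllability matrix. Some care is also needed to treat the degenerate case $\lambda_i = 0$ uniformly, which is precisely why the hypothesis $\eta_N \neq 0$ is indispensable; if $\eta_N = 0$, every $\eta_i^*$ vanishes and the violation condition $-B^\top\eta_i^* = 0 = 0\cdot\xi$ would hold trivially at every grid point. The remaining manipulations — the transpose identities and the horizontal stacking — are routine once this projection idea is in place.
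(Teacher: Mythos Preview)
Your proof is correct and follows essentially the same approach as the paper's own proof: both rely on the costate propagation $\eta_i^* = A^{\top(N-i)}\eta_N$, the projection onto $\xi^\perp$ via $M$, and the full row rank of the controllability matrix $[BM, ABM, \ldots, A^{n_x-1}BM]$. The only difference is cosmetic---you argue by contraposition (LCvx failing everywhere forces $\eta_N=0$) whereas the paper argues directly (since $\eta_N\neq 0$, some block $\eta_N^\top A^k BM$ is nonzero, hence LCvx holds at that index).
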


\begin{proof}
Under Assumption \ref{assumption:controllability1} and that \blue{$\eta_N \neq 0$}, we have
\[
\eta_N^\top \begin{bmatrix}
BM & ABM & \cdots & A^{n_x-1} BM
\end{bmatrix} \neq 0,
\] which implies that there exists a nonnegative integer $i < n_x$ such that $
\eta_N^\top A^i BM \neq 0.$
Since the column space of $M$ is orthogonal to $\xi$, it follows that $\eta_N^\top A^i B$ is neither parallel to $\xi$ nor zero. Furthermore, by Theorem \ref{theorem: LCvx lambda B}, we have $\eta_N^\top A^i B = \eta_{N-i}^\top B.$ Therefore, Theorem \ref{thm: LCvx sufficient} ensures that LCvx is valid at the $(N - i)$th time grid point. \blue{Hence, LCvx holds at least at one of the last $n_x$ time grid points.}
\end{proof}

\blue{Since $\eta_N$ is only known after solving the convex problem, we impose a sufficient condition to ensure $\eta_N \neq 0$ in Theorem~\ref{thm: last n}.}

\begin{assumption}[Transversality]
\label{assumption: Transversality}
\blue{For the optimal solution to $P5$ (and thus to $P3$ and $P4$), we assume that either $\sigma_i^* \neq \rho_{\min}$ for some $i$, or the terminal state $x_{N+1}$ does not minimize $m(x)$ subject to $G(x) = 0$.}
\end{assumption}

\begin{lemma}
\label{lemma: transversality}
\blue{Assumption~\ref{assumption: Transversality} implies $\eta_N \neq 0$.}
\end{lemma}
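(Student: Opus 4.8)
The plan is to prove the contrapositive: I will suppose $\eta_N = 0$ and deduce that this forces the negation of Assumption~\ref{assumption: Transversality}, i.e.\ that $\sigma_i^* = \rho_{\min}$ for \emph{every} $i$ \emph{and} that $x_{N+1}^*$ minimizes $m(x)$ subject to $G(x)=0$. Since a disjunction fails exactly when both disjuncts fail, establishing both of these under $\eta_N = 0$ suffices. The first step is to propagate the hypothesis through the adjoint dynamics: by Theorem~\ref{theorem: LCvx lambda B}, $\eta_i^* = A^{\top(N-i)}\eta_N$, so $\eta_N = 0$ yields $\eta_i^* = 0$ for all $i \in \intset{1}{N}$.

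Next I would handle the $\sigma$-component. With every $\eta_i^* = 0$, the only terms of the Lagrangian involving $(u_i,\sigma_i)$ are $l_i(\sigma_i) + I_V(u_i,\sigma_i)$, so by the pointwise minimization \eqref{equ:ui} the pair $(u_i^*, \sigma_i^*)$ solves $\min_{(u_i,\sigma_i)\in V} l_i(\sigma_i)$. Because $l_i$ is strictly increasing on $[\rho_{\min},\infty)$ and the smallest value of $\sigma_i$ admitted by $V$ is $\rho_{\min}$ — feasibility witnessed by $(u_i,\sigma_i)=(\rho_{\min}\xi,\rho_{\min})$, which lies in both $V_{P3}$ and $V_{P4}$ since $\|\xi\|=1$ and $0<\gamma\le 1$ — any minimizer must satisfy $\sigma_i^* = \rho_{\min}$. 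This establishes the negation of the first disjunct.

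Then I would treat the terminal state. Substituting $\eta_N = 0$ into the stationarity condition \eqref{eq:xN} gives $\nabla m(x_{N+1}^*) + \mu_2^{*\top}\nabla G(x_{N+1}^*) = 0$. Combined with primal feasibility $G(x_{N+1}^*)=0$, this is precisely the KKT system of the convex program $\min_x m(x)$ subject to $G(x)=0$ with multiplier $\mu_2^*$; since $m$ is convex and $G$ affine, KKT stationarity is sufficient for global optimality, so $x_{N+1}^*$ minimizes $m$ on $\{x : G(x)=0\}$. This establishes the negation of the second disjunct, and the contrapositive then yields $\eta_N \neq 0$.

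The main obstacle I anticipate is the $\sigma$-reduction step: one must verify that $\rho_{\min}$ is genuinely \emph{attained} inside $V$ for both $P3$ and $P4$ (not merely a lower bound), and that strict monotonicity of $l_i$ upgrades ``the optimal value equals $l_i(\rho_{\min})$'' to ``$\sigma_i^* = \rho_{\min}$.'' A secondary point requiring care is the transpose bookkeeping in \eqref{eq:xN} and the correct appeal to convexity of $m$ to pass from first-order stationarity to global minimality of the terminal state.
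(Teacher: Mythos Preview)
Your proposal is correct and follows essentially the same contrapositive argument as the paper: assume $\eta_N=0$, propagate via $\eta_i^*=A^{\top(N-i)}\eta_N$ to get all adjoints zero, then read off both that $x_{N+1}^*$ satisfies the KKT conditions of $\min_x m(x)$ s.t.\ $G(x)=0$ from \eqref{eq:xN} and that each $\sigma_i^*=\rho_{\min}$ from the decoupled Lagrangian minimization. Your write-up is in fact more careful than the paper's on two points the paper leaves implicit---exhibiting a witness $(\rho_{\min}\xi,\rho_{\min})\in V$ so that $\rho_{\min}$ is attained, and invoking convexity of $m$ with affineness of $G$ to pass from stationarity to global optimality; the only minor slip is that the joint $(u_i,\sigma_i)$ minimization you need comes from Theorem~\ref{thm: lagrangian} (Eq.~\eqref{equ: min lagrangian}) rather than from \eqref{equ:ui}, which concerns $u_i$ alone.
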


\begin{proof}
\blue{We argue by contradiction. Suppose $\eta_N^* = 0$. Then Theorem~\ref{theorem: LCvx lambda B} implies $\eta_i^* = 0$ for all $i$, and $\nabla m(x_{N+1}^*) + \mu_2^\top \nabla G(x_{N+1}^*) = 0$, so $x_{N+1}^*$ minimizes $m(x)$ subject to $G(x) = 0$. By Theorem~\ref{thm: lagrangian}, $(\sigma^*, u^*)$ minimizes $\sum_{i=1}^N l_i(\sigma_i) + \sum_{i=1}^N I_V(u_i, \sigma_i)$, which, by the monotonicity of $l$, yields $\sigma_i^* = \rho_{\min}$ for all $i$. This contradicts Assumption~\ref{assumption: Transversality}.}
\end{proof}

\blue{Assumption~\ref{assumption: Transversality} is violated if there exists a control below the minimum threshold that achieves the minimum final-state cost. Such violations, checkable post-computation, can be resolved via a bisection strategy; see~\cite[Section 5]{Luo2024-cy} for a full discussion.}


The remainder of this chapter extends Theorem \ref{thm: last n} to cover the entire trajectory of $P3$ and derives the corresponding result for $P4$. Importantly, controllability alone is insufficient for this generalization. Counterexamples for problems without pointing constraints exist \cite[Section 6]{Luo2024-cy}. \blue{To overcome this limitation, we introduce a stronger controllability condition that requires controllability at multiple, arbitrarily chosen time steps along the trajectory.}

\begin{assumption}
\label{assumption: full_controllablity}
\blue{Let $\{P_1, P_2, \ldots, P_{n_x}\} \subset \intset 1 N$ be any set of $n_x$ distinct integers. We require that the matrix
\[
\begin{bmatrix}
A^{N-P_1} \bar{B} & A^{N-P_2} \bar{B} & \cdots & A^{N-P_{n_x}} \bar{B}
\end{bmatrix}
\]
has full row rank. Here, $\bar{B} = BM$ as defined in Assumption~\ref{assumption:controllability1} for $P3$, and $\bar{B} \in \{Bw_1, Bw_2\}$ as defined in Assumption~\ref{assumption:controllability1} for $P4$.
}\end{assumption}
\begin{remark}
 \label{rem:full_controllability}
\blue{Assumption~\ref{assumption:controllability1} implies Assumption~\ref{assumption: full_controllablity} up to an arbitrarily small random perturbation to $A$. In practice, such perturbation is typically unnecessary, and directly verifying Assumption~\ref{assumption:controllability1} is sufficient; see~\cite[Theorem 20]{Luo2024-cy} for a rigorous statement.}   
\end{remark}

\begin{theorem}[Main Theorem]
\label{thm: main}
\blue{Under Assumptions~\ref{assumption:jacobian H}, \ref{assumption: slater}, \ref{assumption: Transversality}, and~\ref{assumption: full_controllablity}, the optimal solution to $P3$ violates the nonconvex constraints in $P1$ at most \(n_x - 1\) times, and the optimal solution to $P4$ violates the nonconvex constraints in $P2$ at most \(2n_x - 2\) times.}
\end{theorem}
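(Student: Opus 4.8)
The plan is to prove the contrapositive for each problem: an excessive number of violating grid points would force $\eta_N = 0$, contradicting Lemma~\ref{lemma: transversality}, which (via Assumption~\ref{assumption: Transversality}) guarantees $\eta_N \neq 0$. The first step is to translate the geometric notion of a violation into a linear-algebraic condition on $\eta_N$. By Theorem~\ref{thm: LCvx sufficient}, LCvx can fail for $P3$ at grid point $i$ only at indices where $-B^\top\eta_i^* = \lambda\xi$ for some $\lambda \in \mathbb{R}$. Combining this with the costate recursion $\eta_i^* = A^{\top(N-i)}\eta_N$ from Theorem~\ref{theorem: LCvx lambda B} and right-multiplying by the matrix $M$ whose columns span $\xi^\perp$, a violation at $i$ forces $\eta_N^\top A^{N-i} BM = 0$, since $\xi^\top M = 0$. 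Thus each violating index $i$ annihilates $\eta_N^\top$ against the block $A^{N-i}\bar B$ with $\bar B = BM$.

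For $P3$ I would then argue by contradiction: suppose there are $n_x$ distinct violating indices $P_1,\dots,P_{n_x} \in \intset{1}{N}$. Stacking the corresponding blocks gives $\eta_N^\top [\,A^{N-P_1}\bar B \;\cdots\; A^{N-P_{n_x}}\bar B\,] = 0$. Assumption~\ref{assumption: full_controllablity} guarantees this matrix has full row rank $n_x$, so its left null space is trivial and $\eta_N = 0$, contradicting $\eta_N \neq 0$. Hence at most $n_x - 1$ violations occur. Assumptions~\ref{assumption:jacobian H} and~\ref{assumption: slater} enter only to license Theorems~\ref{thm: lagrangian} and~\ref{theorem: LCvx lambda B}, which underpin the costate recursion and the membership $-B^\top\eta_i^* \in N_{V(\sigma_i^*)}(u_i^*)$.

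For $P4$ the same mechanism applies after a pigeonhole step. By Theorem~\ref{thm: LCvx sufficient}, a violation at $i$ means $-B^\top\eta_i^*$ is parallel to $\xi_1$ or to $\xi_2$. Using $w_1 \perp \xi_1$ (resp. $w_2 \perp \xi_2$), a violation of the first kind forces $\eta_N^\top A^{N-i}Bw_1 = 0$ and of the second kind $\eta_N^\top A^{N-i}Bw_2 = 0$. I classify each violating index as type~1 or type~2 according to which relation holds; the degenerate case $-B^\top\eta_i^* = 0$ is parallel to both generators and may be placed in either class, so every violation lies in at least one class. If there were $2n_x - 1$ violations, one class would contain at least $n_x$ of them, and applying the rank argument above with $\bar B \in \{Bw_1, Bw_2\}$ (again via Assumption~\ref{assumption: full_controllablity}) would yield $\eta_N = 0$, a contradiction. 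Therefore at most $2n_x - 2$ violations occur.

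The individual steps are short; the parts requiring the most care are the translation from an LCvx violation to the orthogonality relation $\eta_N^\top A^{N-i}\bar B = 0$ (including the $\lambda = 0$ boundary case), and, for $P4$, setting up the pigeonhole cleanly so that an index whose costate is parallel to neither generator—equivalently a zero costate, which is parallel to both—is counted consistently. Once these are in place, Assumption~\ref{assumption: full_controllablity} supplies the rank obstruction uniformly for both problems, and Lemma~\ref{lemma: transversality} closes each contradiction.
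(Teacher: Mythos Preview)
Your proposal is correct and follows essentially the same approach as the paper: both argue by contradiction, converting a violation at index $i$ into the orthogonality relation $\eta_N^\top A^{N-i}\bar B = 0$ (with $\bar B = BM$ for $P3$ and $\bar B \in \{Bw_1, Bw_2\}$ for $P4$), then invoking the full-row-rank condition of Assumption~\ref{assumption: full_controllablity} to force $\eta_N = 0$ and contradict Lemma~\ref{lemma: transversality}. The paper routes the $P3$ case through the statement ``LCvx holds at least once in any $n_x$ distinct steps'' (mirroring Theorem~\ref{thm: last n}) and phrases the $P4$ bound as ``at most $n_x-1$ steps of each parallel type,'' whereas you state the contrapositive and the pigeonhole directly; these are the same argument in different clothing.
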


\begin{proof}
\blue{For $P3$, Assumption~\ref{assumption: full_controllablity} and Lemma \ref{lemma: transversality} allow for the same reasoning as in Theorem~\ref{thm: last n}: LCvx holds at least once in any set of $n_x$ distinct time steps. Hence, if there were $n_x$ or more violations, one could select $n_x$ violating points—contradicting the previous conclusion.}

\blue{For $P4$, Theorem~\ref{thm: LCvx sufficient} shows that LCvx is violated at time step $i$ only if $B^\top \eta_i^*$ is parallel to $\xi_1$ or $\xi_2$, i.e., $w_1^\top B^\top \eta_i^* = 0$ or $w_2^\top B^\top \eta_i^* = 0$, where $w_1$ and $w_2$ are defined in Assumption~\ref{assumption:controllability1}. By Assumption~\ref{assumption: full_controllablity}, Lemma \ref{lemma: transversality} and Theorem~\ref{thm: last n}, any set of $n_x$ distinct time steps contains at least one $i$ such that $B^\top \eta_i^*$ is not parallel to $\xi_1$, and one such that it is not parallel to $\xi_2$. Hence, if LCvx were violated at $n_x$ or more time steps due to either condition, one could find $n_x$ violations of the same type—contradicting the previous conclusion. The total number of violations is thus at most $2n_x - 2$.}
\end{proof}
\begin{remark}
\blue{While Theorem~\ref{thm: main} bounds LCvx violations, they may still arise. A simple fix is to project the violating controls onto the feasible set, which incurs a final-state error of $O(t_f/N)$~\cite[Theorem 23]{Luo2024-cy}. In practice, such violations are rare, and moderate discretization typically suffices. One may either choose $N$ based on theoretical bounds or increase it adaptively if needed.}
 \end{remark}

\begin{remark}
\label{rem:heur}
\blue{
Theorem~\ref{thm: main} for $P4$ holds only in $\mathbb{R}^2$; in higher dimensions, ensuring that $-B^{\top}\eta_i^*$ avoids all cone normal vectors may require an infinite number of controllablity conditions; the resulting violation bound is thus infinity. To restore theoretical guarantees in higher dimensions, one may use polyhedral cones, which yield finitely many normal directions.}

\blue{
However, for control in $\mathbb{R}^3$, although lacking guarantees, our formulation performs well as a heuristic for solving $P2$, with violations observed at only a few time steps. To further promote engine shutdown, we suggest adding a term proportional to $l_i(\|u_i\|)$ to the objective of $P4$ yielding higher-quality heuristics; see the numerical section for a comparison of these $P4$ variants.
}
\end{remark}

\section{Numerical analyses}
\blue{We provide a numerical example for dual-mode OCPs. LCvx for pointing constraints has already been validated through both simulations~\cite{acikmese2013} and experiments \cite{acikmese2012g}: no validation of $P3$ is hence presented here. All results are reproducible at {\fontsize{9.3}{12}\selectfont\texttt{\url{https://github.com/UW-ACL/DT-LCvx-Pointing}}}.}
\label{sec:numerical}
\blue{\subsection{Theoretical verifications for control in $\mathbb{R}^2$}
Consider the flight of a quadrotor, landing at $\bar{r}_f \in \mathbb{R}^2$ with velocity $\bar{v}_f \in \mathbb{R}^2$; initial position, speed and time of flight $\bar{r}_0 \in \mathbb{R}^2$, $\bar{v}_0 \in \mathbb{R}^2$,  $\bar{t}_f \in \mathbb{R}$ are assigned. The vehicle is subject to gravitational acceleration $g \in \mathbb{R}^2$ and linear drag, with constant coefficient $k_d \in \mathbb{R}_+$. Acceleration $u \in \mathbb{R}^2$ constitutes the only control variable.
State, control matrices and external disturbances are time-invariant and are reported as follows.
\begin{align}
\label{eq:Amat} A_c = \left[ \begin{array}{cc} \bm{0}_{2\times2} & \bm{I}_{2\times2} \\ \bm{0}_{2\times2} & -k_d\bm{I}_{2\times2}\end{array}\right],& \quad A = e^{(t_f/N)A_c}\\ \label{eq:Bmat}   B_c = \left[ \begin{array}{cc} \bm{0}_{2\times2} & \bm{0}_{2\times1} \\ \bm{I}_{2\times2} & \bm{0}_{2\times1}\end{array}\right], & \quad B = \int_{0}^{t_f/N} e^{tA_c}\text{d}t \;B_c \\ 
\label{eq:wmat} z_c = \left[ \begin{array}{c} \bm{0}_{2\times1} \\ g\end{array}\right],& \quad z_i = \int_{0}^{t_f/N} e^{tA_c}\text{d}t \;z_c 
\end{align}
Finite bounds on the acceleration magnitude, $\rho_{\min}$ and $\rho_{\max}$, provide necessary margins for the attitude controller. While modeling the feedback control for attitude is beyond the scope of this paper, it is important to note that, in practice, the thrust required by the attitude controller can become unrealistically negative for certain motors. Enforcing a lower bound $\rho_{\min} > 0$ helps mitigate this risk. In addition, the tilt angle relative to a reference direction $\xi \in \mathbb{R}^2$ is constrained by a maximum allowable value $\varphi_{\max}$~\cite{Szmuk2017-ve}.
Data are summarized in Tab~\ref{tab:data}.}

\begin{table}[htbp]
    \centering
    \caption{Numerical example data}
    \label{tab:data}
    \begin{threeparttable}
    \begin{tabular}{cc}
    \toprule \toprule
    Physical quantity &  Value$^\dagger$ \\[0.5ex]
    \midrule
    $\bar{r}_0, \bar{v}_0$    & $[0.5, 1.0], [-1.6, 0.6]$\\[0.5ex]
    $\bar{r}_f, \bar{v}_f$    & $[0, 0], [0, 0]$\\[0.5ex]
    $\bar{t}_f$     & $4.7$ \\[0.5ex]
    $g$             & $[0,-1]$ \\[0.5ex]
    $k_d, \rho_\submin, \rho_\submax$           & $0.05, 1.2, 1.6$ \\[0.5ex]
    $\varphi_\submax, \xi$    & $55^\circ, [0,1]$\\[0.5ex]
    \bottomrule \bottomrule     
    \end{tabular}
    \begin{tablenotes}
    \item $^\dagger$ Normalized according to unitary initial height and gravity acceleration.
    \end{tablenotes}
    \end{threeparttable}
\end{table}
\begin{figure*}[h!]
    \begin{subfigure}[b]{0.49\textwidth} \includegraphics[width=\textwidth]{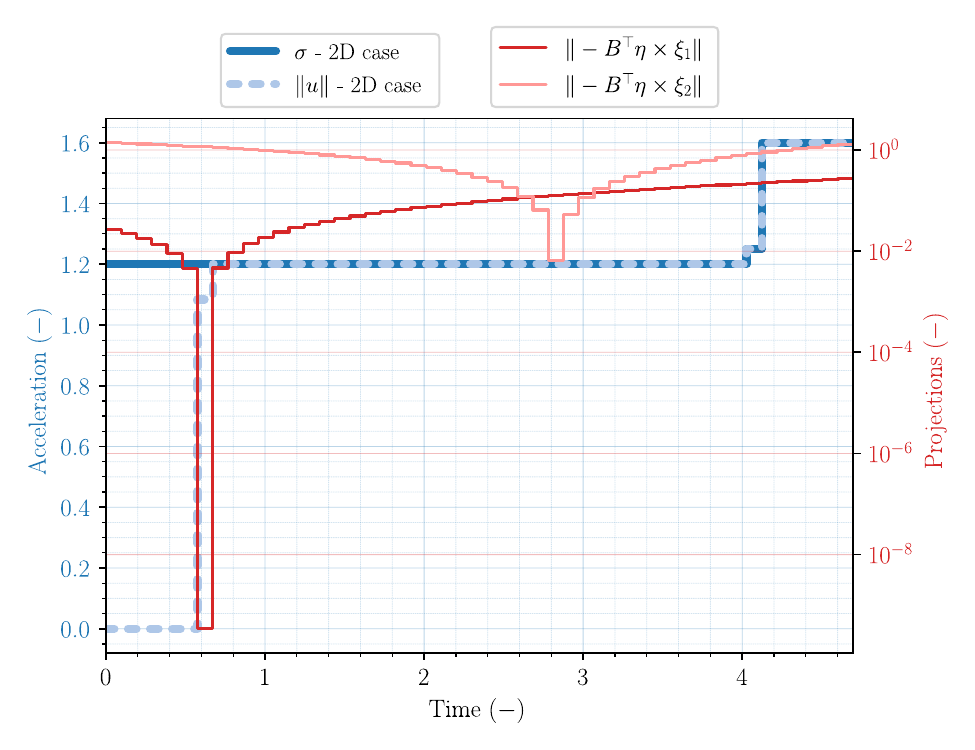}
        \caption{\blue{2D case - Control profiles and metrics corresponding to sufficient condition in Theorem \ref{thm: LCvx sufficient}.}}
        \label{fig:sufficiency}
    \end{subfigure}
    \hfill
        \begin{subfigure}[b]{0.49\textwidth}       \includegraphics[width=\textwidth]{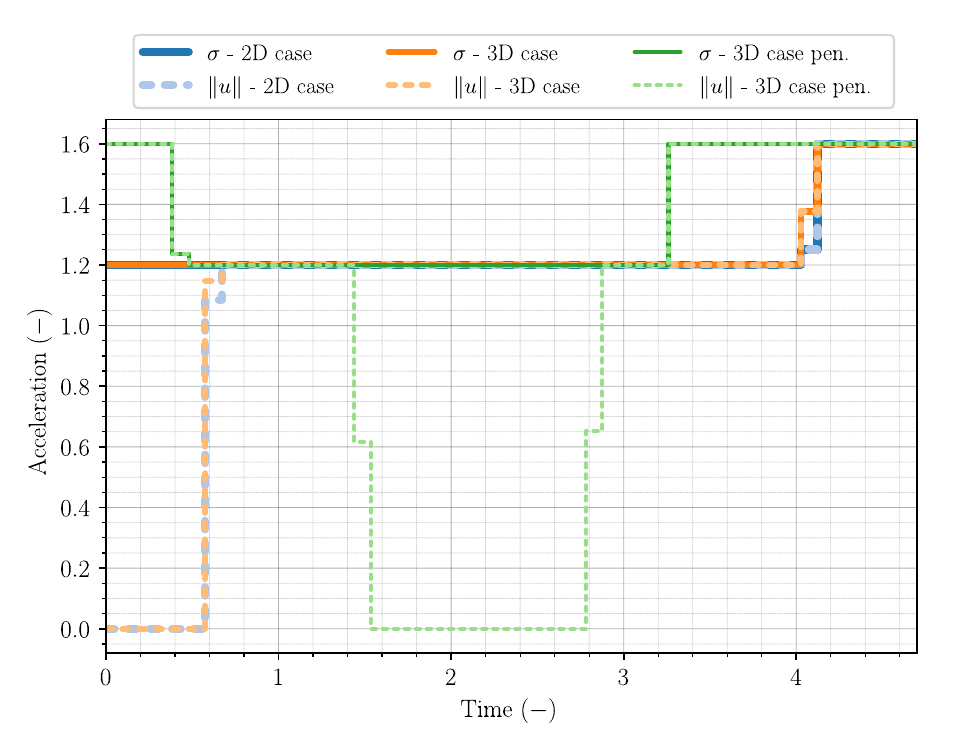}
        \caption{\blue{2D case versus heuristics from Remark \ref{rem:heur}. The additional cost term for the green profiles is $10\,l_i(\|u_i\|)$. }}
        \label{fig:heuristics}
    \end{subfigure}
    \caption{\blue{Results for the numerical experiment with pointing constraint from P4: on the left, the optimal controls and metrics measuring the  sufficient condition from Theorem \ref{thm: LCvx sufficient}. On the right a comparison of the 2D case ("$\bullet$ - 2D case") against the 2 heuristics  presented in Remark \ref{rem:heur}: orange lines ("$\bullet$ - 3D case") for the 3D case, green lines ("$\bullet$ - 3D case pen.") for the 3D case with penalization. QOCO used as solver \cite{Chari2025-ql}.}
    }
\end{figure*}
\blue{$N+1$ is fixed to 50.
We seek to reach the final state in the given time of flight and with control subject to mentioned constraints, while minimizing the control magnitude integral. Therefore, with respect to notation outlined in Sec. \ref{sec:2}, $l_i(\sigma_i) \coloneqq |\sigma_i|$. The problem is solved using pointing constraint expressions from both P3 and P4. With respect to the theoretical discussion, Assumptions \ref{assumption:jacobian H}, \ref{assumption: slater} are verified a priori; Assumption \ref{assumption: Transversality} is verified a posteriori. Assumption~\ref{assumption:controllability1} is satisfied by adding \(10^{-5}\) to the strict upper triangular part of \(A\), which has negligible physical impact under bounded control. Assumption~\ref{assumption: full_controllablity} is verified in Remark~\ref{rem:full_controllability}. Results in Fig.~\ref{fig:sufficiency} illustrate the connection between Theorem \ref{thm: LCvx sufficient} in Sec.~\ref{sec:prooflcvx} and the example: when LCvx holds, the dual variables satisfy $-B^\top \eta_i \neq \lambda \xi_1$ and $-B^\top \eta_i \neq \lambda \xi_2$, or equivalently, the cross products $\|-B^\top \eta_i \times \xi_1\| \neq 0$ and $\|-B^\top \eta_i \times \xi_2\| \neq 0$. In contrast, at the point where LCvx fails (around $t \approx 0.6$), the control lies on side $OA$, and we observe that $\|-B^\top \eta_i \times \xi_1\| = 0$, consistent with Theorem \ref{thm: LCvx sufficient}.  In this case, the pointing constraint from \(P3\) renders the problem infeasible.
 }
\blue{\subsection{Enhancements by heuristics for control in $\mathbb{R}^3$}
Although LCvx lacks theoretical guarantees, we present a three-dimensional control example to demonstrate that LCvx provides good heuristics for solving \( P2 \).
For this purpose, a second problem is designed by adding a third dimension; matrices in Eqs. \eqref{eq:Amat}, \eqref{eq:Bmat}, \eqref{eq:wmat} are augmented with an additional dimension; same data as in Tab. \ref{tab:data} are used; a position lateral offset of $\bar{r}_{0,z} = 0.45$ and speed offset of $\bar{v}_{0,z} = 0$ are added. The employed pointing constraint is a smooth constraint, as given in  \cite[Eq. 26]{Shaffer2024-yx}. A third problem is further solved with the 3D extension and a modified objective function: as noted in Remark \ref{rem:heur}, an additional term proportional to $l_i(\|u_i\|)$ in the objective of $P4$ encourages engine shutdown to save fuel. For our example, we have chosen a weight for $l_i(\|u_i\|)$ equal to 10. A comparison of the controls is reported in Fig. \ref{fig:heuristics}. A comparison between the profile of the 3D case and that of the 2D case shows that only one violation occurs in the 3D case, and it takes place at the same time grid point. As a general heuristic, the transition from a 2D to a 3D setting does not significantly affect the number of violations. A comparison between the 3D case and its penalized version supports Remark~\ref{rem:heur}: the added term promotes engine shutdowns. Notably, only two violations by LCvx are observed in the penalized case, around times 1.4 and 2.8. Solution trajectories are reported in Fig. \ref{fig:trajectories}. The particular structure of the control profile for the third problem determines a different flown trajectory with respect to the first and second problems.}
\begin{figure}[htbp]
    \centering
    \includegraphics[width=0.45\textwidth]{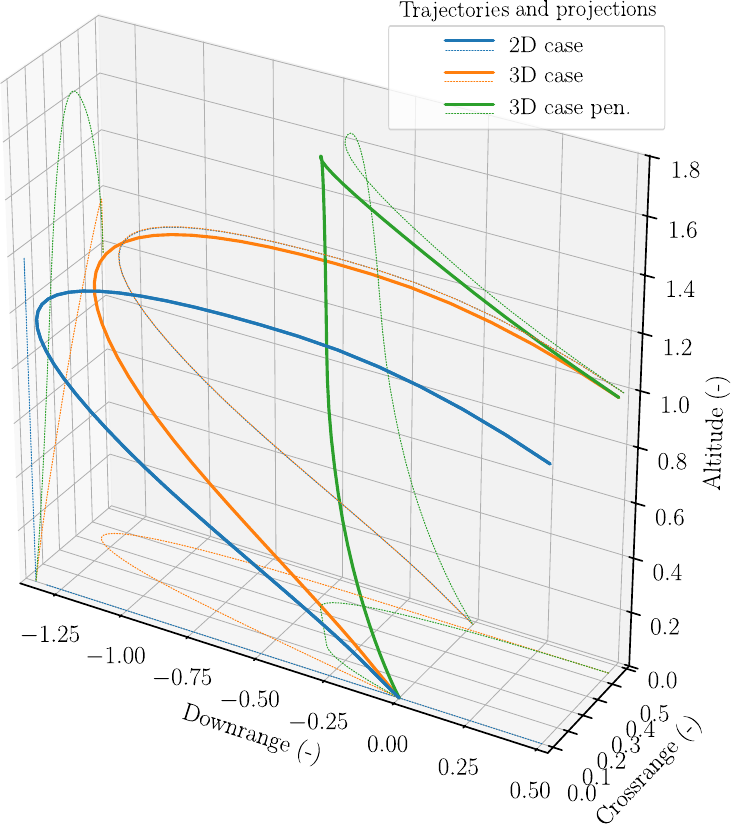}
    \caption{Comparison of the optimal trajectories for the numerical experiment with pointing constraint from P4. The 2D solution ("{\boldmath$\bullet$} - 2D case") is plotted against the two heuristics in Remark~\ref{rem:heur}: orange lines ("{\boldmath$\bullet$} - 3D case") for the 3D case, and green lines ("{\boldmath$\bullet$} - 3D case pen.") for the penalized 3D case.}
    \label{fig:trajectories}
\end{figure}

\section{Conclusion}
\label{sec:concl}
This paper extended DT-LCvx methods for optimal control with pointing constraints, including a novel mixed-integer formulation in which control is off or restricted to an annular sector. Under Slater's condition, controllability, and transversality, we proved that constraint violations occur at no more than $n_x - 1$ and $2n_x - 2$ grid points for the classical and new formulations in $\mathbb{R}^2$, respectively. Numerical results on a landing scenario demonstrate that the new formulation also provides a valid heuristic when control lies in $\mathbb{R}^3$.

\printbibliography
\end{document}